\date{}
\newtheorem{definition}{Definition}
\theoremstyle{plain}
\theoremstyle{definition}
\theoremstyle{remark}
\newtheorem{theorem}{Theorem}
\newtheorem{remark}{Remark}
\newtheorem{example}{Example}
\title{Fractal Calculus to Derive Fractal Frenet Equations for Fractal Curves}
\author{Alireza Khalili Golmankhaneh$^1$, Palle E. T. J{\o}rgensen $^2$, Dimiter Prodanov $^3$\\
$^1$ Department of Physics, Urmia Branch,\\ Islamic Azad University, Urmia 63896,West Azerbaijan,  Iran\\
alirezakhalili2002@yahoo.co.in\\
$^2$ Department of Mathematics,The University of Iowa \\Iowa City, IA
52242-1419, USA\\
palle-jorgensen@uiowa.edu\\
$^3$ ITSDP, IICT, Bulgarian Academy of Sciences, Sofia, Bulgaria.\\
EHS and NERF, Interuniversity Microelectronics Center (Imec), 3001 Leuven, Belgium\\
dimiter.prodanov@imec.be\\
}
\begin{document}

\maketitle

\let\thefootnote\relax
\footnotetext{ MSC2020:28A80,54F50,53A04} 
\footnote{Corresponding author Alireza Khalili Golmankhaneh}
\begin{abstract}
This paper introduces the concept of Fractal Frenet equations, a set of differential equations used to describe the behavior of vectors along fractal curves. The study explores the analogue of arc length for fractal curves, providing a measure to quantify their length. It also discusses fundamental mathematical constructs, such as the analogue of the unit tangent vector, which indicates the curve's direction at different points, and the analogue of curvature vector or fractal curvature vector, which characterizes its curvature at various locations. The concept of torsion, describing the twisting and turning of fractal curves in three-dimensional space, is also explored. Specific examples, like the fractal helix and the fractal snowflake, illustrate the application and significance of the Fractal Frenet equations.
\end{abstract} 

\section{Introduction}
Fractal geometry has emerged as a powerful mathematical framework for understanding and describing complex natural patterns that exhibit self-similarity across different scales and dimensions \cite{Mandelbro,fraser2020assouad,ma-12,robertson2020baudelaire,Ewqq,Qaqqqqxcs}. These mesmerizing geometric shapes can be found in various natural occurrences, such as snowflakes, Romanesco broccoli, mountain tops, clouds, tree branches, lightning, river deltas, state borders, buildings, and crystals \cite{b-6}. Their captivating property of endless repetition and similarity at different levels of magnification has led to widespread fascination and application in diverse scientific disciplines.

Fractals often possess fractional dimensions, exceeding their topological dimensions, making them indefinitely complicated and challenging to analyze using traditional Euclidean geometry \cite{rosenberg2020fractal,falconer1999techniques}. Their non-differentiability and lack of integrability have pushed the study of fractal curves towards measures like the Hausdorff measure \cite{Qaswet}. As a result, the exploration of fractal curves and their behaviors requires the development of specialized mathematical tools beyond standard calculus \cite{jorgensen2006analysis}.

The significance of fractals extends beyond mathematics and reaches into various scientific domains, including biology, chemistry, earth sciences, physics, and technology \cite{Rwaqqq}. In physics, fractal geometry has been employed to model physical phenomena in non-integer spaces \cite{Welch-5,Shlesinger-6}. Furthermore, fractal patterns have been experimentally detected, from quantum observations to relativistic light aberration effects \cite{AWqq789}.

Through the application of mathematical methodologies such as fractal geometry, probability theory, and analysis, significant strides have been made in understanding complex processes \cite{ma-7,guo2000oscillation}. In particular, the study of heat transfer in fractal materials involves the use of continuous models with non-integer dimensional spaces \cite{tarasov2016heat,ma-5,uchaikin2013fractional,prodanov2020generalized,ma-6,Trifcebook,ma-9}. Measure theory plays a pivotal role in defining derivation and integration on fractal sets, providing a systematic approach to measure and analyze these structures, and facilitating the development of integral concepts tailored to fractals \cite{bishop2017fractals,jiang1998some,Withers,bongiorno2011henstock,bongiorno2015fundamental,bongiorno2018derivatives,giona1995fractal}.

To address the challenges of dealing with fractal curves and to harness their potential applications, researchers have formulated fractal calculus as a generalization of ordinary calculus, designed to incorporate fractals \cite{parvate2009calculus,parvate2011calculus,Alireza-book}. This extension of calculus has paved the way for investigating stochastic processes, differential equations, and anomalous diffusion on fractal structures \cite{khalili2019random,deppman2023fractal,samayoa2020fractal,golmankhaneh2021fractalBro,golmankhaneh2020stochastic,golmankhaneh2018sub,Alireza-Fernandez-1,golmankhaneh2021equilibrium,golmankhaneh2023fuzzification}.

Focusing on nonstandard Lagrangians as their origin, the study delved into Finsler-Randers manifolds and fractal electrodynamics, revealing their quadratic damping geodesic characteristics \cite{el2022nonstandard,ELNABULSI2022112329}.

It is demonstrated that there exist Hilbert spaces, originating from the Hausdorff measures, that permit the existence of multiresolution wavelets \cite{dutkay2006wavelets}.

Moreover, fractal Fourier and Laplace transforms of local and non-local fractal derivatives have been introduced and utilized to solve fractal differential equations \cite{golmankhaneh2019sumudu,Fourier1,Alireza-book}. These mathematical developments have opened up new avenues for understanding the behavior of fractal curves and analyzing their intricate properties.
Fractal calculus is applied to fractal interpolation functions and Weierstrass functions, both of which have the potential to exhibit non-differentiable and non-integrable behaviors within the realm of ordinary calculus \cite{gowrisankar2021fractal}.

In this paper, we provide a comprehensive overview of fractal geometry, calculus, and their applications in various scientific disciplines. We review the concept of $F^\alpha$-Calculus on fractal curves, explore the analogue of arc length, unit tangent vector, and fractal curvature vector, and discuss the concept of torsion. Additionally, we introduce the Fractal Frenet equations, a set of differential equations that describe the behavior of specific vectors along a fractal curve. By delving into these mathematical constructs, we aim to shed light on the captivating and enigmatic world of fractal geometry and its relevance in the scientific realm.
\section{Review of $F^{\alpha}$-Calculus on Fractal Curves}
 The adaptation of calculus principles to fractal curves through $F^{\alpha}$-calculus provides a powerful mathematical framework for understanding and analyzing functions on these intricate mathematical constructs. This section highlights the significance of $F^{\alpha}$-calculus in dealing with the complexities of fractal geometry and lays the groundwork for further exploration and application in various scientific and engineering domains.
In the following, we want to explain how to parameterize a self-similar fractal curve in two dimensions \cite{parvate2011calculus}.

Let $\mathfrak{F}$ be a self-similar fractal curve, and let $Q_{i}, i=0,...,n-1$ be linear transformations composed of rotations and scalings. Each $Q_{i}$ can be expressed as:

\begin{equation}\label{yyuuup1}
 Q_{i} = s_{i}\begin{pmatrix} \cos\theta_{i} & -\sin \theta_{i} \\ \sin \theta_{i} & \cos \theta_{i} \end{pmatrix},
\end{equation}
subject to the condition:
\begin{equation}\label{yyuuup2}
\sum_{i=0}^{n-1}Q_{i}(\mathbf{r}) = \mathbf{r},
\end{equation}
where $\mathbf{r}$ is a vector and $0 < s_{i} < 1$ for $i=0,...,n-1$. Then, the fractal curve $\mathfrak{F}$ is defined by the iterative equation:

\begin{equation}\label{yyuuup3}
 S_{j}(\mathbf{r}) = \sum_{i=0}^{j-1}Q_{i}(\mathbf{r}_{0}) + Q_{j}(\mathbf{r}), \quad j=0,...,n-1,
\end{equation}
where $\mathbf{r}_{0}$ is a fixed vector. The limit set of this iterative equation will form a curve since the way $S_{j}$ are constructed from $Q_{i}$. To parameterize the fractal curve $\mathfrak{F}$, consider $\lfloor mt\rfloor$ as the integer part of $mt$, and define:
\begin{equation}\label{yyuuup4}
  \mathbf{u}(t) = \mathbf{u}_{x}(t)\hat{i}+\mathbf{u}_{y}(t)\hat{j}=\sum_{i=0}^{\lfloor mt\rfloor-1}Q_{i}(\mathbf{r}_{0}) + Q_{\lfloor mt\rfloor}(\mathbf{u}(mt-\lfloor mt\rfloor)), \quad 0 \leq t \leq 1,
\end{equation}
which provides a parametric representation of $\mathfrak{F}$.
The parameterization and the measurement concepts described here allow us to analyze and quantify properties of the self-similar fractal curve $\mathfrak{F}$ in two dimensions.
Next, we introduce some definitions related to the fractal curve $\mathfrak{F}$ and its calculus:

\begin{definition}
For a set $\mathfrak{F}$ and a subdivision $P_{[a,b]}$, where $a < b$ within the interval $[a_{0},b_{0}]$, we define:
\[ \sigma^{\alpha}[\mathfrak{F},P] = \sum_{i=0}^{n-1} \frac{|\mathbf{u}(t_{i+1})-\mathbf{u}(t_{i})|^{\alpha}}{\Gamma(\alpha+1)}, \]
where $|.|$ denotes the Euclidean norm on $\mathbb{R}^{n}$ and $P_{[a,b]} = \{a=t_{0},...,t_{n}=b\}$.
\end{definition}
\begin{definition}
For a given $\delta > 0$ and $a_{0} \leq a \leq b \leq b_{0}$, the coarse-grained mass is defined as:
\[ \gamma_{\delta}^{\alpha}(\mathfrak{F},a,b) = \inf_{\{P_{[a,b]}: |P| \leq \delta\}} \sigma^{\alpha}[\mathfrak{F},P], \]
where $|P|$ represents the maximum length of the subintervals in the subdivision $P$.
\end{definition}

\begin{definition}
For $a_{0}\leq a < b < b_{0}$, the mass function $\gamma^{\alpha}(\mathfrak{F},a,b)$ is defined as the limit of the coarse-grained mass $\gamma_{\delta}^{\alpha}(\mathfrak{F},a,b)$ as $\delta$ approaches zero. Mathematically, this can be expressed as:
\begin{equation}\label{hy-vh-52}
\gamma^{\alpha}(\mathfrak{F},a,b) = \lim_{\delta\rightarrow0} \gamma_{\delta}^{\alpha}(\mathfrak{F},a,b).
\end{equation}
The mass function $\gamma^{\alpha}(\mathfrak{F},a,b)$ quantifies the fractal properties of the set $\mathfrak{F}$ within the interval $[a, b]$, and it serves as a crucial measure for characterizing self-similar fractal curves. As the value of $\delta$ becomes smaller, the coarse-grained mass $\gamma_{\delta}^{\alpha}(\mathfrak{F},a,b)$ provides a finer approximation to the actual mass $\gamma^{\alpha}(\mathfrak{F},a,b)$. Taking the limit as $\delta$ tends to zero allows us to obtain the precise mass value for the fractal set within the given interval.
\end{definition}

\begin{definition}
The rise function of a fractal curve $\mathfrak{F}$, denoted as $S_{\mathfrak{F}}^{\alpha}(v)$, is defined as follows:

\begin{equation}
  S_{\mathfrak{F}}^{\alpha}(v)=\left\{
                      \begin{array}{ll}
                        \mathfrak{M}^{\alpha}(\mathfrak{F},p_{0},v), & v\geq p_{0} ; \\
                        -\mathfrak{M}^{\alpha}(\mathfrak{F},v,p_{0}), & v<p_{0},
                      \end{array}
                    \right.
\end{equation}
Here, $v\in [a_{1},a_{2}]$, and $p_{0}=a_{1}$ is an arbitrary and fixed number. The function $S_{\mathfrak{F}}^{\alpha}(v)$ represents the mass of the fractal curve $\mathfrak{F}$ up to the point $v$.\\
The rise function is useful for understanding how the mass of the fractal curve accumulates as we traverse along the curve. It provides a measure of the distribution of mass at different points of the curve and helps in characterizing the scaling behavior of $\mathfrak{F}$ with respect to the mass function $\mathfrak{M}^{\alpha}(\mathfrak{F},a,b)$. The rise function is an important tool in the study of self-similar fractals and their properties.
\end{definition}

\begin{definition}
The staircase function, denoted by $J(\theta)$, is defined as follows:
\begin{equation}\label{nnnnn777}
J(\theta)=S_{\mathfrak{F}}^{\alpha}(\textbf{u}^{-1}(\theta)),~~~ \theta\in \mathfrak{F}.
\end{equation}
Here, $\textbf{u}^{-1}(\theta)$ represents the inverse of the mapping function $\textbf{u}(t)$ for $\theta$ belonging to the fractal set $\mathfrak{F}$. The staircase function $J(\theta)$ provides a way to associate points from the fractal curve $\mathfrak{F}$ with their corresponding mass values represented by the staircase function $S_{\mathfrak{F}}^{\alpha}(t)$.

Since $S_{\mathfrak{F}}^{\alpha}(t)$ is a strictly increasing function, it is invertible, ensuring that the mapping $\textbf{u}^{-1}(\theta)$ is well-defined and one-to-one. This means that each point on the fractal curve $\mathfrak{F}$ is uniquely linked to its corresponding mass value through the staircase function.

The staircase function is a valuable tool in analyzing the relationship between the geometrical properties of the fractal curve and its mass distribution. It allows us to explore the distribution of mass along the fractal curve and study how different parts of the curve contribute to the overall mass of the fractal.
\end{definition}

\begin{definition}
The $\gamma$-dimension of the fractal curve $\mathfrak{F}$, denoted by $\textmd{dim}_{\gamma}(\mathfrak{F})$, can be defined as follows:
\begin{align}\label{iimmnn}
\textmd{dim}_{\gamma}(\mathfrak{F}) &= \inf\{\alpha : \gamma^{\alpha}(\mathfrak{F},a,b) = 0\} \nonumber\\
&= \sup\{\alpha : \gamma^{\alpha}(\mathfrak{F},a,b) = \infty\}.
\end{align}
\end{definition}

The $\gamma$-dimension provides a measure of the scaling behavior of the fractal set $\mathfrak{F}$ with respect to the mass function $\gamma^{\alpha}(\mathfrak{F},a,b)$. It helps in quantifying the self-similarity and the fractal nature of $\mathfrak{F}$ within a specific interval $[a,b]$ in $\mathbb{R}^n$.

\begin{definition}
Let $\mathfrak{F}\subset \mathbb{R}^n$ be a fractal curve, and let $f:\mathfrak{F}\rightarrow \mathbb{R}$ be a real-valued function. Given a point $\theta \in \mathfrak{F}$, a number $l$ is considered to be the limit of $f$ through points of $\mathfrak{F}$ if, for any given positive value $\epsilon$, there exists a corresponding $\delta>0$ such that:
\begin{equation}\label{ddss221}
\theta'\in \mathfrak{F}, ~~~ \text{and} ~~~ |\theta' - \theta|<\delta \quad \Rightarrow \quad |f(\theta') - l|<\epsilon.
\end{equation}
If such a number $l$ exists and satisfies the above condition, it is denoted as the fractal limit of $f(\theta)$ and is represented as:
\begin{equation}\label{zzza955159}
  l=\mathfrak{F}_{-}\lim_{\theta'-\theta}f(\theta)
\end{equation}
This definition implies that the function $f(\theta)$ has a well-defined limit at the point $\theta$ on the fractal curve $\mathfrak{F}$ if, as points $\theta'$ on $\mathfrak{F}$ get arbitrarily close to $\theta$, the corresponding values of $f(\theta')$ get arbitrarily close to $l$. In essence, it extends the concept of a limit from ordinary calculus to functions defined on fractal curves.
\end{definition}

\begin{definition}
A function $f:\mathfrak{F}\rightarrow \mathbb{R}$ is considered to be $\mathfrak{F}$-continuous at a point $\theta\in \mathfrak{F}$ if the following condition holds:
\begin{equation}\label{xxzas}
f(\theta)=\mathfrak{F}_{-}\lim_{\theta'\rightarrow \theta}f(\theta').
\end{equation}
In other words, $f(\theta)$ is $\mathfrak{F}$-continuous at $\theta$ if its value at $\theta$ is equal to the fractal limit of $f(\theta')$ as $\theta'$ approaches $\theta$ within the fractal curve $\mathfrak{F}$. This definition characterizes a specific type of continuity suitable for functions defined on fractal curves.
\end{definition}

\begin{definition}
  For a function $f:\mathfrak{F}\rightarrow \mathbb{R}$ and $t_{1},t_{2}\in [a_{0},b_{0}]$ with $t_{1}\leq t_{2}$, we define the upper and lower limits of $f$ over the interval $C(t_{1},t_{2})$ as follows:
\begin{equation}\label{yuolmnbvx74159-9}
M[f,C(t_{1},t_{2})]=\sup_{\theta \in C(t_{1},t_{2})}f(\theta),
\end{equation}
\begin{equation}\label{rrrvx74159-9}
m[f,C(t_{1},t_{2})]=\inf_{\theta \in C(t_{1},t_{2})}f(\theta).
\end{equation}
\end{definition}

\begin{definition}
  Let $S_{\mathfrak{F}}^{\alpha}(t)$ be finite for $t\in [a,b]\subset[a_{0},b_{0}]$. Consider a subdivision $P$ of $[a,b]$ with points ${t_{0},...,t_{n}}$. The upper and lower $\mathfrak{F}^{\alpha}$-sums for the function $f$ over the subdivision $P$ are given respectively by:
\begin{equation}\label{uujji-741}
U^{\alpha}[f,\mathfrak{F},P]=\sum_{i=0}^{n-1}M[f,C(t_{i},t_{i+1})]\left[S_{\mathfrak{F}}^{\alpha}(t_{i+1})-S_{\mathfrak{F}}^{\alpha}(t_{i})\right],
\end{equation}
\begin{equation}\label{uuuu44jji-741}
L^{\alpha}[f,\mathfrak{F},P]=\sum_{i=0}^{n-1}m[f,C(t_{i},t_{i+1})]\left[S_{\mathfrak{F}}^{\alpha}(t_{i+1})-S_{\mathfrak{F}}^{\alpha}(t_{i})\right].
\end{equation}
It is evident from Equations \eqref{uujji-741} and \eqref{uuuu44jji-741} that
\begin{equation}\label{hhh7774114-7}
U^{\alpha}[f,\mathfrak{F},P]\geq L^{\alpha}[f,\mathfrak{F},P].
\end{equation}
\end{definition}

\begin{definition}
  Let $\mathfrak{F}$ be such that $S_{\mathfrak{F}}^{\alpha}$ is finite on $[a,b]$. For a function $f\in B(\mathfrak{F})$, the lower and upper $\mathfrak{F}^{\alpha}$-integrals of $f$ on the section $C(a,b)$ are defined as follows:
\begin{equation}\label{ppo96-4}
\underline{\int_{C(a,b)}}f(\theta)d_{\mathfrak{F}}^{\alpha}\theta= \sup_{P_{[a,b]}} L^{\alpha}[f,\mathfrak{F},P],
\end{equation}
\begin{equation}\label{dddaaaqqq55}
\overline{ \int_{C(a,b)}}f(\theta)d_{\mathfrak{F}}^{\alpha}\theta= \inf_{P_{[a,b]}} U^{\alpha}[f,\mathfrak{F},P].
\end{equation}
These integrals represent the supremum and infimum, respectively, of the lower and upper $\mathfrak{F}^{\alpha}$-sums of $f$ over all possible subdivisions of $[a,b]$. The lower integral captures the largest sum of function values over the subdivisions, while the upper integral captures the smallest sum of function values over the subdivisions.
\end{definition}

\begin{definition}
  If $f\in B(\mathfrak{F})$, we say that $f$ is $\mathfrak{F}^{\alpha}$-integrable on $C(a,b)$ if the lower and upper $\mathfrak{F}^{\alpha}$-integrals of $f$ on $C(a,b)$ are equal, i.e.,
\begin{equation}\label{pplll}
\underline{\int_{C(a,b)}}f(\theta)d_{\mathfrak{F}}^{\alpha}\theta=\overline{ \int_{C(a,b)}}f(\theta)d_{\mathfrak{F}}^{\alpha}\theta,
\end{equation}
and the common value is referred to as the $\mathfrak{F}^{\alpha}$-integral:
\begin{equation}\label{552plmn}
\int_{C(a,b)}f(\theta)d_{\mathfrak{F}}^{\alpha}\theta.
\end{equation}
This integral represents the sum of function values along the fractal curve $\mathfrak{F}$ on the section $C(a,b)$ with respect to the $\mathfrak{F}^{\alpha}$-measure. If this common value exists, then $f$ is considered $\mathfrak{F}^{\alpha}$-integrable on $C(a,b)$.
\end{definition}

\begin{definition}\label{Defoip}
Consider a fractal curve $\mathfrak{F}$. The $\mathfrak{F}^{\alpha}$-derivative of a function $f$ at a point $\theta\in \mathfrak{F}$ is defined as follows:
\begin{equation}\label{frw-98i}
\frac{d^{\alpha}_{\mathfrak{F}}f}{d^{\alpha}_{\mathfrak{F}}\theta}=D_{\mathfrak{F},\theta}^{\alpha}f(\theta)=\mathfrak{F}-\lim_{\theta'\rightarrow\theta}
\frac{f(\theta')-f(\theta)}{J(\theta')-J(\theta)},
\end{equation}
provided the limit exists. Here, $J(\theta)$ represents the staircase function associated with the fractal curve $\mathfrak{F}$ as defined earlier. This derivative extends the concept of ordinary differentiation to functions defined on fractal curves, taking into account the non-smooth and self-replicating nature of these curves.
\end{definition}

\begin{theorem}
  Suppose $f\in B(\mathfrak{F})$ is an $\mathfrak{F}$-continuous function on the section $C(a,b)$, and let $g:\mathfrak{F}\rightarrow \mathbb{R}$ be defined as follows:
\begin{equation}\label{sssaqza3}
g(\textbf{u}(t))=\int_{C(a,t)}f(\theta)d_{\mathfrak{F}}^{\alpha}\theta,
\end{equation}
for all $t\in [a,b]$, where $\textbf{u}(t)$ is the parameterization of the fractal curve $\mathfrak{F}$ given earlier. Then, it follows that:
\begin{equation}\label{rreq11}
D_{\mathfrak{F}}^{\alpha}g(\theta)=f(\theta).
\end{equation}
In other words, the $\mathfrak{F}^{\alpha}$-derivative of the function $g$ is equal to the original function $f$ on the fractal curve $\mathfrak{F}$. This result highlights the connection between the fractal integral and derivative, indicating that integration and differentiation can be understood in the context of fractal curves.
\end{theorem}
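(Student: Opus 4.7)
The plan is to mimic the classical proof of the fundamental theorem of calculus, adapted to the fractal setting. I would fix a point $\theta=\mathbf{u}(t)\in\mathfrak{F}$ and consider a nearby point $\theta'=\mathbf{u}(t')\in\mathfrak{F}$ with, say, $t<t'$. The first step is to use additivity of the $\mathfrak{F}^{\alpha}$-integral (which follows at once from the sup/inf characterization in \eqref{ppo96-4}--\eqref{dddaaaqqq55} via concatenation of subdivisions) to write
\begin{equation*}
g(\theta')-g(\theta)=\int_{C(t,t')}f(\theta'')\,d_{\mathfrak{F}}^{\alpha}\theta''.
\end{equation*}

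The second step is the key sandwich. Treating the trivial one-interval subdivision $\{t,t'\}$ in the definitions \eqref{uujji-741}--\eqref{uuuu44jji-741}, and then using that the $\mathfrak{F}^{\alpha}$-integral lies between every upper and lower sum, I obtain
\begin{equation*}
m[f,C(t,t')]\bigl[S_{\mathfrak{F}}^{\alpha}(t')-S_{\mathfrak{F}}^{\alpha}(t)\bigr]\;\le\;g(\theta')-g(\theta)\;\le\;M[f,C(t,t')]\bigl[S_{\mathfrak{F}}^{\alpha}(t')-S_{\mathfrak{F}}^{\alpha}(t)\bigr].
\end{equation*}
By the defining identity \eqref{nnnnn777}, $J(\theta')-J(\theta)=S_{\mathfrak{F}}^{\alpha}(t')-S_{\mathfrak{F}}^{\alpha}(t)$, which is positive because $S_{\mathfrak{F}}^{\alpha}$ is strictly increasing. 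Dividing through gives
\begin{equation*}
m[f,C(t,t')]\;\le\;\frac{g(\theta')-g(\theta)}{J(\theta')-J(\theta)}\;\le\;M[f,C(t,t')].
\end{equation*}

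The third step is to let $\theta'\to\theta$ along $\mathfrak{F}$ and invoke $\mathfrak{F}$-continuity of $f$ at $\theta$ from \eqref{xxzas}: for every $\epsilon>0$ there exists $\delta>0$ such that every $\theta''\in\mathfrak{F}$ with $|\theta''-\theta|<\delta$ satisfies $|f(\theta'')-f(\theta)|<\epsilon$. Once $\theta'$ is close enough to $\theta$, the whole arc $C(t,t')$ lies inside this neighborhood, so both $M[f,C(t,t')]$ and $m[f,C(t,t')]$ lie in $(f(\theta)-\epsilon,f(\theta)+\epsilon)$. Squeezing the difference quotient gives the claim. The case $t'<t$ is symmetric, provided one uses that the mass increment flips sign consistently with the integral, so the quotient has the same limit. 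Finally, one must verify that the fractal limit in Definition~\ref{Defoip} is realized by sequences $\theta'=\mathbf{u}(t')$: this is where the parameterization \eqref{yyuuup4} enters, and is essentially automatic since $\mathbf{u}$ traces out $\mathfrak{F}$ onto which the limit is restricted.

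The main obstacle I anticipate is not the analytic squeeze, which is standard, but the careful justification of the two-sided version: on a fractal curve one cannot freely talk about ``arcs to the left and right,'' and the sign of the increment $J(\theta')-J(\theta)$ depends on the ordering of the parameter values rather than on any intrinsic order on $\mathfrak{F}$. Controlling this requires invoking the strict monotonicity of $S_{\mathfrak{F}}^{\alpha}$ together with the well-definedness of $\mathbf{u}^{-1}$ to translate the fractal limit $\theta'\to\theta$ into the parameter limit $t'\to t$, and then applying the sandwich argument uniformly in the direction of approach.
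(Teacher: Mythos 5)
Your argument is correct and is essentially the standard fundamental-theorem proof for $F^{\alpha}$-calculus on fractal curves: the paper itself states this theorem without proof (it appears in the review section, citing the original $F^{\alpha}$-calculus literature), and the argument given there is exactly your sandwich of the increment $g(\theta')-g(\theta)$ between $m[f,C(t,t')]$ and $M[f,C(t,t')]$ times $J(\theta')-J(\theta)=S_{\mathfrak{F}}^{\alpha}(t')-S_{\mathfrak{F}}^{\alpha}(t)$, followed by the squeeze via $\mathfrak{F}$-continuity. You also correctly flag the only delicate points, namely the translation between the Euclidean limit $\theta'\rightarrow\theta$ and the parameter limit $t'\rightarrow t$ (which needs the curve to be non-self-intersecting so that $\textbf{u}^{-1}$ is continuous) and the sign bookkeeping for $t'<t$, so no gap remains beyond the routine verification that $\mathfrak{F}$-continuity guarantees the integrability needed for $g$ to be well defined.
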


\begin{theorem}
  Suppose $f:\mathfrak{F}\rightarrow \mathbb{R}$ is an $\mathfrak{F}^{\alpha}$-differentiable function, and $h:\mathfrak{F}\rightarrow \mathbb{R}$ is $\mathfrak{F}$-continuous such that $h(\theta)=D_{\mathfrak{F}}^{\alpha} f(\theta)$. Then, we have:
\begin{equation}\label{hbvcxz12364-951}
\int_{C(a,b)}h(\theta) d_{\mathfrak{F}}^{\alpha}\theta=f(\textbf{u}(b))-f(\textbf{u}(a)).
\end{equation}
This result shows that the integral of the product of an $\mathfrak{F}$-continuous function $h$ and an $\mathfrak{F}^{\alpha}$-differentiable function $f$ over the section $C(t, t')$ of the fractal curve $\mathfrak{F}$ is equal to the difference in the values of the function $f$ at the endpoints of the section. The theorem establishes a relationship between the fractal integral and the $\mathfrak{F}^{\alpha}$-derivative and provides a way to compute the integral using the derivative of the function.
\end{theorem}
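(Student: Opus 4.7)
The plan is to reduce this second fundamental theorem to the first (which was just stated) together with a constancy principle for functions whose $\mathfrak{F}^{\alpha}$-derivative vanishes. Define an auxiliary function
\[
g(\mathbf{u}(t))\;=\;\int_{C(a,t)} h(\theta)\,d_{\mathfrak{F}}^{\alpha}\theta,\qquad t\in[a,b].
\]
Since $h$ is $\mathfrak{F}$-continuous on $C(a,b)$ and $h\in B(\mathfrak{F})$, the preceding theorem applies and yields $D_{\mathfrak{F}}^{\alpha}g(\theta)=h(\theta)$ for every $\theta\in C(a,b)$. By hypothesis, $D_{\mathfrak{F}}^{\alpha}f(\theta)=h(\theta)$ as well, so the function $\phi := f-g$ satisfies $D_{\mathfrak{F}}^{\alpha}\phi(\theta)=0$ on $C(a,b)$.

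Next I would establish the constancy lemma: if $D_{\mathfrak{F}}^{\alpha}\phi\equiv 0$ on $C(a,b)$, then $\phi(\mathbf{u}(b))=\phi(\mathbf{u}(a))$. The cleanest route uses the defining formula
\[
D_{\mathfrak{F}}^{\alpha}\phi(\theta)\;=\;\mathfrak{F}\!-\!\lim_{\theta'\to\theta}\frac{\phi(\theta')-\phi(\theta)}{J(\theta')-J(\theta)}.
\]
Because $J$ is strictly increasing on $[a,b]$, we may view $\phi$ as a function $\Phi$ of the parameter $s=J(\theta)$ on the interval $[J(\mathbf{u}(a)),J(\mathbf{u}(b))]$; the condition $D_{\mathfrak{F}}^{\alpha}\phi\equiv 0$ translates into an ordinary one-sided derivative of $\Phi$ that vanishes identically, so a standard argument (an $\epsilon$-$\delta$ covering of $[J(\mathbf{u}(a)),J(\mathbf{u}(b))]$ together with continuity of $\Phi$ inherited from $\mathfrak{F}$-continuity of $\phi$) gives $\Phi(s)\equiv$ const. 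Granting this, we obtain $\phi(\mathbf{u}(b))=\phi(\mathbf{u}(a))$, i.e.\ $f(\mathbf{u}(b))-g(\mathbf{u}(b))=f(\mathbf{u}(a))-g(\mathbf{u}(a))$.

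Finally, I would read off the conclusion by substituting the definition of $g$: since $g(\mathbf{u}(a))=\int_{C(a,a)}h\,d_{\mathfrak{F}}^{\alpha}\theta=0$ and $g(\mathbf{u}(b))=\int_{C(a,b)}h(\theta)\,d_{\mathfrak{F}}^{\alpha}\theta$, rearrangement yields
\[
\int_{C(a,b)}h(\theta)\,d_{\mathfrak{F}}^{\alpha}\theta\;=\;f(\mathbf{u}(b))-f(\mathbf{u}(a)),
\]
which is the claimed identity.

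\paragraph{Main obstacle.}
The delicate step is the constancy lemma for the $\mathfrak{F}^{\alpha}$-derivative. The issue is that the limit in Definition~\ref{Defoip} is taken through points of the fractal set $\mathfrak{F}$, not through an interval of real numbers, so a direct appeal to the classical mean value theorem is not available. The argument must either transport the problem to the $J$-image, as above, or develop a fractal analogue of the Darboux/Heine–Borel chaining: cover $C(a,b)$ by sufficiently small sub-sections on which $|\phi(\theta')-\phi(\theta)|\le\varepsilon\,|J(\theta')-J(\theta)|$, and telescope along a subdivision to obtain $|\phi(\mathbf{u}(b))-\phi(\mathbf{u}(a))|\le \varepsilon\,[J(\mathbf{u}(b))-J(\mathbf{u}(a))]$ for arbitrary $\varepsilon>0$. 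Either route relies crucially on the monotonicity and finiteness of the staircase $S_{\mathfrak{F}}^{\alpha}$ on $[a,b]$, properties guaranteed by the standing hypotheses of the theorem.
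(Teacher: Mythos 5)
Your proposal is sound in outline, but be aware that the paper itself states this theorem without proof: it sits in the review section on $F^{\alpha}$-calculus and is quoted from the cited literature (Parvate--Satin--Gangal), so there is no in-paper argument to match. Measured against the standard proof in that source, your route is organized differently: the literature proves the identity directly by telescoping on $f$ itself --- for each $\theta$, $\mathfrak{F}^{\alpha}$-differentiability gives a neighborhood in which $|f(\theta')-f(\theta)-h(\theta)(J(\theta')-J(\theta))|\le\epsilon\,|J(\theta')-J(\theta)|$, a compactness/chaining argument yields a subdivision fine enough that these estimates telescope to $|f(\textbf{u}(b))-f(\textbf{u}(a))-\sigma|\le\epsilon\,(S_{\mathfrak{F}}^{\alpha}(b)-S_{\mathfrak{F}}^{\alpha}(a))$ for a Riemann-type sum $\sigma$ of $h$, and $\mathfrak{F}$-continuity of $h$ makes $\sigma$ converge to the integral. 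Your decomposition $\phi=f-g$, with $g$ the indefinite integral, instead routes everything through the first fundamental theorem plus a constancy lemma; this buys a cleaner separation of where the difficulty lives, at the cost of having to prove the constancy lemma, which (as you rightly observe) is the same chaining estimate in disguise --- so the two proofs are the same analysis packaged differently. Two points to tighten. First, the transport-to-$J$ argument needs $\textbf{u}$ to be injective and $J$ to be a homeomorphism of $C(a,b)$ onto $[S_{\mathfrak{F}}^{\alpha}(a),S_{\mathfrak{F}}^{\alpha}(b)]$, so that the fractal limit through points of $\mathfrak{F}$ becomes an ordinary two-sided limit in $s$; this holds in the paper's setting (where $\textbf{u}^{-1}$ is well defined and $S_{\mathfrak{F}}^{\alpha}$ is continuous and strictly increasing), but it must be said, and in the chaining variant note that the pointwise estimate is centered at $\theta$, so passing to an estimate valid for arbitrary pairs in a small sub-section requires going through the center point and then using monotonicity of $S_{\mathfrak{F}}^{\alpha}$ to telescope. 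Second, to invoke the first fundamental theorem you need $h\in B(\mathfrak{F})$; this follows because an $\mathfrak{F}$-continuous function on the compact section $C(a,b)$ is bounded, and deserves a sentence. With those remarks supplied, the argument is complete.
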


\section{Analogue of Arc Length of Fractal Curve }
This section delves into the definition of the analogue of arc length for fractal curves, as well as the concepts of unit tangent vector, fractal curvature, and torsion. These essential mathematical constructs offer unique insights into the intricate properties of fractal curves, allowing us to navigate the dimensional complexities inherent in these mesmerizing geometrical shapes.
Consider a fractal curve denoted by $\textbf{u}=\textbf{u}(t)$, where $t$ lies within the interval $I=[a_{0},b_{0}]$. In the context of fractal geometry, we define the analogue of arc length, denoted by $s_{\mathfrak{F}}$, for this curve as follows:
\begin{equation}\label{oii}
  s_{\mathfrak{F}}=s_{\mathfrak{F}}(t)=\int_{C(t_{0},t)}\left|
\frac{d^{\alpha}_{\mathfrak{F}}\textbf{u}(t)}
{d^{\alpha}_{\mathfrak{F}}t} \right|d^{\alpha}_{\mathfrak{F}}t
\end{equation}
Moreover, we define the derivative of the fractal arc length, denoted by $d^{\alpha}_{\mathfrak{F}}s_{\mathfrak{F}}/
d^{\alpha}_{\mathfrak{F}}t$, as \cite{khalili2023non}:
\begin{equation}
  \frac{d^{\alpha}_{\mathfrak{F}}s_{\mathfrak{F}}}
{d^{\alpha}_{\mathfrak{F}}t}=\left|
\frac{d^{\alpha}_{\mathfrak{F}}\textbf{u}(t)}
{d^{\alpha}_{\mathfrak{F}}t} \right|
\end{equation}
Here, $\alpha$ denotes the fractal order and $t>t_{0}$. It's important to note that the fractal curve $\textbf{u}(t)$ takes on the value $\theta$.
\begin{example}
Let's consider a fractal helix described by the following equation:
\begin{equation}\label{ttttq9}
  \textbf{u}(t)=a\cos(S_{F}^{\alpha}(t))\hat{e}_{1}+
a\sin(S_{F}^{\alpha}(t))\hat{e}_{2}+b S_{F}^{\alpha}(t) \hat{e}_{3}.
\end{equation}
The analogue of arc length for this fractal helix is given by:
\begin{equation}
  s_{\mathfrak{F}}=\int_{C(t_{0},t)}(a^{2}+b^{2})^{1/2}d^{\alpha}_{\mathfrak{F}}t
=(a^{2}+b^{2})^{1/2}S_{F}^{\alpha}(t)\equiv(a^{2}+b^{2})^{1/2}J(\theta)
\end{equation}
where $S_{F}^{\alpha}(0)=0$.

\begin{figure}[H]
  \centering
  \includegraphics[scale=0.6]{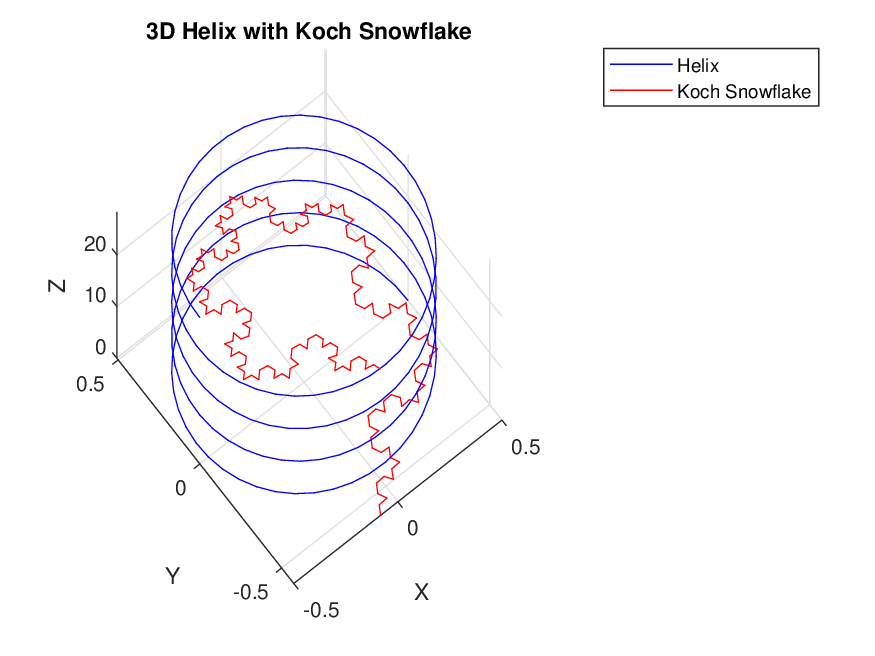}
  \caption{Graph displaying the fascinating fractal helix}\label{jjk4}
\end{figure}

\begin{figure}[H]
  \centering
  \includegraphics[scale=0.6]{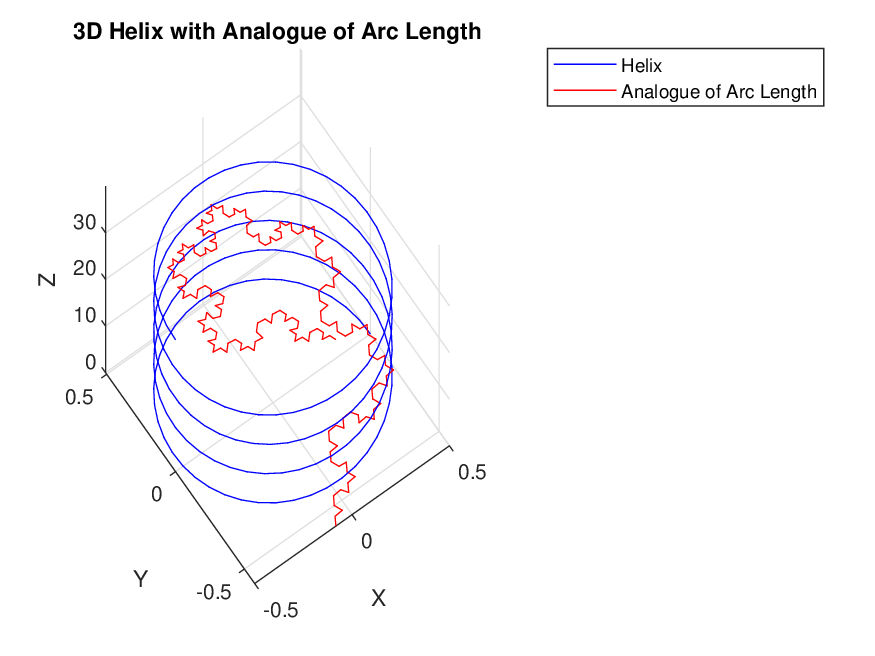}
  \caption{Graph illustrating the analogue of arc length}\label{gt4}
\end{figure}
Figure \ref{jjk4}, intricate illustration reveals the captivating beauty of a coil pattern that replicates itself at different scales, exhibiting intricate geometric intricacies. Embodying the mesmerizing realm of fractal geometry, the helix showcases its distinctive attributes through an infinite sequence of self-replicating coils, resulting in an enchanting array of mesmerizing shapes and forms.\\
Figure \ref{gt4}, visual representation depicts the concept of measuring distances along a curve, capturing the relationship between the length of the curve and the variable defining its position. By plotting the analogue of arc length, we gain valuable insights into how the curve's length changes concerning the parameter or variable that governs its shape. This graphical portrayal offers a clear and intuitive understanding of the concept, enabling us to explore the intricate behavior of curves in relation to their respective arc lengths.

\end{example}
\begin{remark}
We assume the following notations for convenience:
\begin{equation}
 \dot{\textbf{u}}(\theta)=\frac{d^{\alpha}_{\mathfrak{F}}\textbf{u}(\theta)}
{d^{\alpha}_{\mathfrak{F}}\theta},~~~\ddot{\textbf{u}}(\theta)=
\frac{d^{2\alpha}_{\mathfrak{F}}\textbf{u}(t)}
{d^{\alpha}_{\mathfrak{F}}\theta^2},~~~\textbf{u}'(t)=
\frac{d^{\alpha}_{\mathfrak{F}}\textbf{u}(t)}
{d^{\alpha}_{\mathfrak{F}}t},~~~\textbf{u}''(t)=
\frac{d^{2\alpha}_{\mathfrak{F}}\textbf{u}(t)}
{d^{\alpha}_{\mathfrak{F}}t^2}.
\end{equation}
\end{remark}

\begin{definition}

The concept denoting the "analogue of the unit tangent vector" for a fractal curve is as follows: If $\dot{\textbf{u}}(\theta)$ represents the derivative of the fractal curve concerning $\theta$, then $\textbf{t}=\textbf{t}(\theta)=\dot{\textbf{u}}$ is recognized as the equivalent of the unit tangent vector pertaining to the fractal curve.

Alternatively, the tangent vector linked with the fractal curve, labeled as $\textbf{t}_{\mathfrak{F}}$, is articulated as:
\begin{equation}
\textbf{t}_{\mathfrak{F}}=\textbf{t}_{\mathfrak{F}}
(\theta)=\frac{d^{\alpha}_{\mathfrak{F}}\textbf{u}(\theta)}
{d^{\alpha}_{\mathfrak{F}}\theta}.
\end{equation}
This tangent vector conforms to the class $C^{\alpha}$, indicating its qualities of continuity and differentiability.
\end{definition}

\begin{example}
Consider the fractal helix given by the equation:
\begin{equation}
  \textbf{u}(t)=a \cos(S_{\mathfrak{F}}^{\alpha}(t)) \hat{e}_{1}+
b \sin(S_{\mathfrak{F}}^{\alpha}(t)) \hat{e}_{2}+b S_{\mathfrak{F}}^{\alpha}(t)\hat{e}_{3},~~~a\neq0,~b\neq0.
\end{equation}
Then, the derivative of this fractal curve with respect to $t$  is given by:
\begin{equation}
  \frac{d^{\alpha}_{\mathfrak{F}}\textbf{u}(t)}
{d^{\alpha}_{\mathfrak{F}}t}=\frac{1}{\Gamma(\alpha+1)}(-a \sin(S_{\mathfrak{F}}^{\alpha}(t)) \hat{e}_{1}+
b \cos(S_{\mathfrak{F}}^{\alpha}(t)) \hat{e}_{2}+b \chi_{\mathfrak{F}}\hat{e}_{3}),~~~.
\end{equation}
where $\chi_{\mathfrak{F}}$ represents characteristic function.
Consequently, the magnitude of the derivative is given by:
\begin{equation}
\left|\frac{d^{\alpha}_{\mathfrak{F}}\textbf{u}(t)}
{d^{\alpha}_{\mathfrak{F}}t}\right|=\frac{1}{\Gamma(\alpha+1)}(a^2+b^2)^{1/2}.
\end{equation}
Hence, the analogue of the unit tangent vector $\textbf{t}_{\mathfrak{F}}$ for the fractal curve can be expressed as:
\begin{equation}
\textbf{t}_{\mathfrak{F}}=\frac{d^{\alpha}_{\mathfrak{F}}\textbf{u}(\theta)}
{d^{\alpha}_{\mathfrak{F}}\theta}=\frac{\frac{d^{\alpha}_{\mathfrak{F}}\textbf{u}(t)}
{d^{\alpha}_{\mathfrak{F}}t}}{\left|\frac{d^{\alpha}_{\mathfrak{F}}\textbf{u}(t)}
{d^{\alpha}_{\mathfrak{F}}t}\right|}=\frac{1}{(a^2+b^2)^{1/2}}(-a \sin(S_{\mathfrak{F}}^{\alpha}(t)) \hat{e}_{1}+
b \cos(S_{\mathfrak{F}}^{\alpha}(t)) \hat{e}_{2}+b \chi_{\mathfrak{F}}\hat{e}_{3}).
\end{equation}
\end{example}

\begin{definition}
The fractal vector, denoted as $\textbf{k}_{\mathfrak{F}}$, or alternatively, the analogue of curvature vector/fractal curvature vector on $\mathfrak{F}$, is defined as follows:

\begin{equation}
\textbf{k}_{\mathfrak{F}}=
\dot{\textbf{t}}_{\mathfrak{F}}=\frac{d^{\alpha}_{\mathfrak{F}}
\textbf{t}_{\mathfrak{F}}(\theta)}
{d^{\alpha}_{\mathfrak{F}}\theta}=\frac{d^{\alpha}_{\mathfrak{F}}
\textbf{t}_{\mathfrak{F}}}
{d^{\alpha}_{\mathfrak{F}}t}\frac{d^{\alpha}_{\mathfrak{F}}t}
{d^{\alpha}_{\mathfrak{F}}\theta}=\frac{d^{\alpha}_{\mathfrak{F}}
\textbf{t}_{\mathfrak{F}}}
{d^{\alpha}_{\mathfrak{F}}t}\bigg/\frac{d^{\alpha}_{\mathfrak{F}}\theta}
{d^{\alpha}_{\mathfrak{F}}t}=\frac{d^{\alpha}_{\mathfrak{F}}
\textbf{t}_{\mathfrak{F}}}
{d^{\alpha}_{\mathfrak{F}}t}\bigg/\bigg|\frac{d^{\alpha}_{\mathfrak{F}}\textbf{u}}
{d^{\alpha}_{\mathfrak{F}}t}\bigg|
\end{equation}
This vector characterizes the analogue of curvature or fractal curvature on the fractal curve $\mathfrak{F}$.
\end{definition}

\begin{definition}
The magnitude of the fractal curvature is denoted by $\kappa_{\mathfrak{F}}=|\textbf{k}_{\mathfrak{F}}(\theta)|$ and is referred to as the radius of fractal curvature at $\theta$.
Furthermore, its reciprocal, represented by $\rho_{\mathfrak{F}}=\frac{1}{\kappa_{\mathfrak{F}}}$, is also called the radius of fractal curvature at $\theta$.
\end{definition}

\begin{example}
Considering the fractal snowflake with the following  equation:
\begin{equation}
\textbf{u}(t)=(a \cos(S_{\mathfrak{F}}^{\alpha}(t))\hat{e}_{1}+a \sin(S_{\mathfrak{F}}^{\alpha}(t))\hat{e}_{2}),~~~~a>0,
\end{equation}
we find that its fractal derivative with respect to $t$ is given by:
\begin{equation}
  \frac{d^{\alpha}_{\mathfrak{F}}\textbf{u}(t)}
{d^{\alpha}_{\mathfrak{F}}t}=\frac{1}{\Gamma(\alpha+1)}(-a \sin(S_{\mathfrak{F}}^{\alpha}(t))\hat{e}_{1}+a \cos(S_{\mathfrak{F}}^{\alpha}(t))\hat{e}_{2}),~~~
\end{equation}
with a magnitude of:
\begin{equation}
\bigg|\frac{d^{\alpha}_{\mathfrak{F}}\textbf{u}(t)}
{d^{\alpha}_{\mathfrak{F}}t}\bigg|=\frac{a}{\Gamma(\alpha+1)}.
\end{equation}
Hence, the analogue of the unit tangent vector, $\textbf{t}{\mathfrak{F}}$, is given by:
\begin{equation}
  \textbf{t}_{\mathfrak{F}}=(-\sin(S_{\mathfrak{F}}^{\alpha}(t))
\hat{e}_{1}+\cos(S_{\mathfrak{F}}^{\alpha}(t))
\hat{e}_{2})
\end{equation}
and the fractal curvature vector, $\textbf{k}_{\mathfrak{F}}$, can be expressed as:
\begin{equation}
  \textbf{k}_{\mathfrak{F}}=-\frac{1}{a} (\cos(S_{\mathfrak{F}}^{\alpha}(t))\hat{e}_{1}+ \sin(S_{\mathfrak{F}}^{\alpha}(t))\hat{e}_{2})
\end{equation}
\end{example}

\begin{example}
Let us consider the fractal curve described by:
\begin{equation}
  \textbf{u}(t)=S_{\mathfrak{F}}^{\alpha}(t)\hat{ e}_{1}
+\frac{1}{3}S_{\mathfrak{F}}^{\alpha}(t)^3 \hat{ e}_{2}
\end{equation}
Upon calculating the fractal derivative with respect to $t$, we obtain:
\begin{equation}
  \frac{d^{\alpha}_{\mathfrak{F}}\textbf{u}(t)}
{d^{\alpha}_{\mathfrak{F}}t}=\frac{1}{\Gamma(\alpha+1)}(e_{1}+
S_{\mathfrak{F}}^{\alpha}(t)^2\hat{e}_{2}),~~~
\end{equation}
where the magnitude of this derivative is given by:
\begin{equation}
\bigg|\frac{d^{\alpha}_{\mathfrak{F}}\textbf{u}(t)}
{d^{\alpha}_{\mathfrak{F}}t}\bigg|=\frac{1}{\Gamma(\alpha+1)}
(1+S_{\mathfrak{F}}^{\alpha}(t)^4)^{1/2}.
\end{equation}
As a result, the analogue of the unit tangent vector, $\textbf{t}_{\mathfrak{F}}$, is expressed as:
\begin{equation}
  \textbf{t}_{\mathfrak{F}}=
\frac{1}{(1+S_{\mathfrak{F}}^{\alpha}(t)^4)^{1/2}}
(\hat{e}_{1}+
S_{\mathfrak{F}}^{\alpha}(t)^2\hat{e}_{2})
\end{equation}
and the fractal curvature vector, $\textbf{k}_{\mathfrak{F}}$, is given by:
\begin{equation}
  \textbf{k}_{\mathfrak{F}}=-2S_{\mathfrak{F}}^{\alpha}(t)(1+
S_{\mathfrak{F}}^{\alpha}(t)^4)^{-2}(S_{\mathfrak{F}}^{\alpha}(t)^{2}\hat{e}_{1}-\hat{e}_{2})
\end{equation}
\end{example}

\begin{definition}
We define the unit vector in the direction of $\textbf{k}_{\mathfrak{F}}$ as follows:
\begin{equation}\label{dx}
  \textbf{n}_{\mathfrak{F}}=\frac{\textbf{k}_{\mathfrak{F}}(\theta)}
{|\textbf{k}_{\mathfrak{F}}(\theta)|}
\end{equation}
This unit vector points in the same direction as $\textbf{k}_{\mathfrak{F}}$ and has a magnitude of 1, ensuring its normalization.
\end{definition}

\begin{definition}
The unit fractal binormal vector is defined as:
\begin{equation}\label{uytrdx}
  \textbf{b}_{\mathfrak{F}}(\theta)=\textbf{t}_{\mathfrak{F}}(\theta)\times
\textbf{n}_{\mathfrak{F}}(\theta)
\end{equation}
This vector is obtained by taking the cross product of the unit tangent vector $\textbf{t}_{\mathfrak{F}}$ and the unit vector $\textbf{n}_{\mathfrak{F}}(\theta)$ that points in the direction of the fractal curvature $\textbf{k}_{\mathfrak{F}}(\theta)$. The resulting vector, $\textbf{b}_{\mathfrak{F}}(\theta)$, is orthogonal to both $\textbf{t}_{\mathfrak{F}}(\theta)$ and $\textbf{n}_{\mathfrak{F}}(\theta)$ and represents the binormal direction of the fractal curve at $\theta$.
\end{definition}

\begin{example}\label{hhhhh}
Consider the fractal curve described by:
\begin{equation}
  \textbf{u}(t)=a \cos(S_{\mathfrak{F}}^{\alpha}(t)) \hat{e}_{1}+
b \sin(S_{\mathfrak{F}}^{\alpha}(t)) \hat{e}_{2}+b S_{\mathfrak{F}}^{\alpha}(t)\hat{e}_{3},~~~a\neq0,~b\neq0.
\end{equation}
Then, the unit tangent vector $\textbf{t}_{\mathfrak{F}}$, and the fractal curvature vector $\textbf{k}_{\mathfrak{F}}$ are given by:
\begin{equation}
\textbf{t}_{\mathfrak{F}}=(a^2+b^2)^{-1/2}(-a \sin( S_{\mathfrak{F}}^{\alpha}(t))\hat{e}{1}+a
\cos(S_{\mathfrak{F}}^{\alpha}(t))\hat{e}{2}+b\hat{e}{3})
\end{equation}
\begin{equation}
\textbf{k}_{\mathfrak{F}}=-
\frac{a}{a^2+b^2}(\cos(S_{\mathfrak{F}}^{\alpha}(t))
\hat{e}{1}+\sin(S_{\mathfrak{F}}^{\alpha}(t))\hat{e}{2})
\end{equation}
The unit vector $\textbf{n}_{\mathfrak{F}}$ in the direction of $\textbf{k}_{\mathfrak{F}}$ is given by:
\begin{equation}
\textbf{n}_{\mathfrak{F}}=-
(\cos(S_{\mathfrak{F}}^{\alpha}(t))\hat{e}{1}+
\sin(S_{\mathfrak{F}}^{\alpha}(t))\hat{e}{2})
\end{equation}
And the unit fractal binormal vector $\textbf{b}_{\mathfrak{F}}$ is obtained as:
\begin{equation}
\textbf{b}_{\mathfrak{F}}=(a^{2}+b^{2})^{-1/2}(b \sin(S_{\mathfrak{F}}^{\alpha}(t)) \hat{e}{1}-b \cos(S_{\mathfrak{F}}^{\alpha}(t))\hat{e}{2}+a \hat{e}{3})
\end{equation}
The equation of the fractal binormal line at $t=t_{0}$ is then given by:
\begin{align}\label{t9512369}
&y=\textbf{u}(t_{0})+k \textbf{b}_{\mathfrak{F}}(t_{0})\nonumber\\
 & =(a\cos(S_{\mathfrak{F}}^{\alpha}(t_{0}))+k b(a^2+b^2)^{-1/2}\sin(S_{\mathfrak{F}}^{\alpha}(t_{0}))\hat{e}_{1}+
(a\sin(S_{\mathfrak{F}}^{\alpha}(t_{0}))-k b(a^2+b^2)^{-1/2}\cos(S_{\mathfrak{F}}^{\alpha}(t_{0}))\hat{e}_{2}\nonumber\\&+
(bS_{\mathfrak{F}}^{\alpha}(t_{0}))+ak(a^2+b^2)^{-1/2})\hat{e}_{3},~~~-\infty<k<\infty.
\end{align}
\end{example}
\begin{definition}
The fractal torsion of the fractal curve at $\theta$, denoted by $\tau_{\mathfrak{F}}(\theta)$, is defined as follows:
\begin{equation}\label{bnmc}
  \tau_{\mathfrak{F}}(\theta)=-\frac{d^{\alpha}_{\mathfrak{F}}
\textbf{b}_{\mathfrak{F}}}
{d^{\alpha}_{\mathfrak{F}}\theta}\cdot\textbf{n}_{\mathfrak{F}}
\end{equation}
Here, $\textbf{n}_{\mathfrak{F}}$ represents the unit fractal binormal vector, and $d^{\alpha}_{\mathfrak{F}}
\textbf{b}_{\mathfrak{F}}/
d^{\alpha}_{\mathfrak{F}}\theta $ is the fractal derivative of the unit fractal binormal vector with respect to $\theta$.
\end{definition}

\begin{example}
Let's reconsider the fractal helix defined by:
\begin{equation}\label{iiii}
  \textbf{u}(t)=a \cos(S_{\mathfrak{F}}^{\alpha}(t)) \hat{e}_{1}+
b \sin(S_{\mathfrak{F}}^{\alpha}(t)) \hat{e}_{2}+b S_{\mathfrak{F}}^{\alpha}(t)\hat{e}_{3},~~~a\neq0,~b\neq0.
\end{equation}
As obtained previously, the unit fractal binormal vector is given by:
\begin{equation}
  \textbf{b}_{\mathfrak{F}}=(a^2+b^2)^{-1/2}
(b\sin(S_{\mathfrak{F}}^{\alpha}(t)) \hat{e}_{1}-
b\cos(S_{\mathfrak{F}}^{\alpha}(t)) \hat{e}_{2}+a \hat{e}_{3})
\end{equation}
The fractal torsion, denoted by $\tau_{\mathfrak{F}}$, is constant and can be calculated as follows:
\begin{equation}
 \dot{\textbf{b}}_{\mathfrak{F}}= \frac{d^{\alpha}_{\mathfrak{F}}
\textbf{b}_{\mathfrak{F}}}
{d^{\alpha}_{\mathfrak{F}}\theta}=\frac{d^{\alpha}_{\mathfrak{F}}
\textbf{b}_{\mathfrak{F}}}
{d^{\alpha}_{\mathfrak{F}}t}\bigg/\bigg|\frac{d^{\alpha}_{\mathfrak{F}}
\textbf{u}}
{d^{\alpha}_{\mathfrak{F}}t}\bigg|=(a^2+b^2)^{-1}(b \cos(S_{\mathfrak{F}}^{\alpha}(t)) \hat{e}_{1}+
b \sin(S_{\mathfrak{F}}^{\alpha}(t)) \hat{e}_{2})
\end{equation}

\begin{align}
  \tau_{\mathfrak{F}}&=-\frac{d^{\alpha}_{\mathfrak{F}}
\textbf{b}_{\mathfrak{F}}}
{d^{\alpha}_{\mathfrak{F}}\theta}\cdot\textbf{n}_{\mathfrak{F}}\nonumber\\&=
-\frac{1}{\Gamma(\alpha+1)(a^2+b^2)}(b\cos(S_{\mathfrak{F}}^{\alpha}(t))\hat{e}_{1}+
b\sin(S_{\mathfrak{F}}^{\alpha}(t))\hat{e}_{2})\cdot(-
\cos(S_{\mathfrak{F}}^{\alpha}(t))\hat{e}_{1}-
\sin(S_{\mathfrak{F}}^{\alpha}(t))\hat{e}_{2})\nonumber\\&=
\frac{b}{\Gamma(\alpha+1)(a^2+b^2)}
\end{align}
Depending on the sign of $b$, the fractal helix can be classified as a right-handed curve if $b>0$ or a left-handed fractal helix curve if $b<0$.
\end{example}
The following theorem holds for a point on the fractal curve $\textbf{u}(t)$ where $\textbf{k}_{\mathfrak{F}}\neq 0$:

\begin{theorem}
At such a point, the fractal torsion $\tau_{\mathfrak{F}}$ can be expressed as:
\begin{equation}
\tau_{\mathfrak{F}}=\frac{|\textbf{u}'\textbf{u}''\textbf{u}'''|}{|\textbf{u}'\times \textbf{u}'''|^2}
\end{equation}
where $\textbf{u}'$, $\textbf{u}''$, and $\textbf{u}'''$ represent the first, second, and third derivatives of the fractal curve $\textbf{u}(t)$ with respect to $t$.
\end{theorem}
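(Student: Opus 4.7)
The plan is to mimic the classical derivation of the Frenet--Serret torsion formula, using the fractal chain rule
\[
(\,\cdot\,)' \;=\; \frac{d_{\mathfrak{F}}^{\alpha}}{d_{\mathfrak{F}}^{\alpha}t}
\;=\; \bigg|\frac{d_{\mathfrak{F}}^{\alpha}\textbf{u}}{d_{\mathfrak{F}}^{\alpha}t}\bigg|\,\dot{(\,\cdot\,)}
\]
in place of the ordinary chain rule with respect to arc length. Writing $v := |\textbf{u}'|$ for brevity, the preceding definitions give $\textbf{u}' = v\,\textbf{t}_{\mathfrak{F}}$ and $\dot{\textbf{t}}_{\mathfrak{F}} = \textbf{k}_{\mathfrak{F}} = \kappa_{\mathfrak{F}}\textbf{n}_{\mathfrak{F}}$, so that the Frenet frame $\{\textbf{t}_{\mathfrak{F}},\textbf{n}_{\mathfrak{F}},\textbf{b}_{\mathfrak{F}}\}$ is well defined wherever $\kappa_{\mathfrak{F}}\neq 0$.

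First I would expand $\textbf{u}''$ in this frame by differentiating $\textbf{u}' = v\,\textbf{t}_{\mathfrak{F}}$ with the fractal product rule, which yields $\textbf{u}'' = v'\,\textbf{t}_{\mathfrak{F}} + v^{2}\kappa_{\mathfrak{F}}\,\textbf{n}_{\mathfrak{F}}$. Crossing with $\textbf{u}'$ kills the $\textbf{t}_{\mathfrak{F}}$-component and gives $\textbf{u}' \times \textbf{u}'' = v^{3}\kappa_{\mathfrak{F}}\,\textbf{b}_{\mathfrak{F}}$, hence $|\textbf{u}' \times \textbf{u}''|^{2} = v^{6}\kappa_{\mathfrak{F}}^{2}$. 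Differentiating once more produces $\textbf{u}'''$ as a long combination of $\textbf{t}_{\mathfrak{F}}$, $\textbf{n}_{\mathfrak{F}}$, and $\textbf{b}_{\mathfrak{F}}$; only the $\textbf{b}_{\mathfrak{F}}$-coefficient survives in the triple product $(\textbf{u}'\times\textbf{u}'')\cdot\textbf{u}'''$. That coefficient arises solely from the $\textbf{b}_{\mathfrak{F}}$-part of $v^{2}\kappa_{\mathfrak{F}}\,\textbf{n}_{\mathfrak{F}}'$, and using the middle fractal Frenet relation $\dot{\textbf{n}}_{\mathfrak{F}} = -\kappa_{\mathfrak{F}}\textbf{t}_{\mathfrak{F}} + \tau_{\mathfrak{F}}\textbf{b}_{\mathfrak{F}}$ one finds this coefficient equal to $v^{3}\kappa_{\mathfrak{F}}\tau_{\mathfrak{F}}$. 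Therefore
\[
(\textbf{u}'\times\textbf{u}'')\cdot\textbf{u}''' \;=\; v^{6}\kappa_{\mathfrak{F}}^{2}\tau_{\mathfrak{F}},
\]
and dividing by $|\textbf{u}'\times\textbf{u}''|^{2} = v^{6}\kappa_{\mathfrak{F}}^{2}$ recovers $\tau_{\mathfrak{F}}$. (The denominator in the stated identity appears to be a typographical slip for $|\textbf{u}'\times\textbf{u}''|^{2}$; otherwise the degrees in $v$ do not balance.)

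The main obstacle is establishing the middle fractal Frenet equation $\dot{\textbf{n}}_{\mathfrak{F}} = -\kappa_{\mathfrak{F}}\textbf{t}_{\mathfrak{F}} + \tau_{\mathfrak{F}}\textbf{b}_{\mathfrak{F}}$, since the whole derivation is driven by this single identity. It rests on three orthogonality relations: $\textbf{t}_{\mathfrak{F}}\cdot\dot{\textbf{t}}_{\mathfrak{F}}=0$, $\textbf{b}_{\mathfrak{F}}\cdot\dot{\textbf{b}}_{\mathfrak{F}}=0$, and $\dot{\textbf{b}}_{\mathfrak{F}}\cdot\textbf{t}_{\mathfrak{F}}=0$, each obtained by applying $D^{\alpha}_{\mathfrak{F}}$ to $\textbf{t}_{\mathfrak{F}}\!\cdot\!\textbf{t}_{\mathfrak{F}}=1$, $\textbf{b}_{\mathfrak{F}}\!\cdot\!\textbf{b}_{\mathfrak{F}}=1$, and $\textbf{b}_{\mathfrak{F}}\!\cdot\!\textbf{t}_{\mathfrak{F}}=0$ respectively, combined with a fractal product rule for $d^{\alpha}_{\mathfrak{F}}$ acting on inner products. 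Once $\dot{\textbf{n}}_{\mathfrak{F}}$ is confined to $\text{span}\{\textbf{t}_{\mathfrak{F}},\textbf{b}_{\mathfrak{F}}\}$, its $\textbf{t}_{\mathfrak{F}}$-coefficient is forced to be $-\kappa_{\mathfrak{F}}$ (differentiate $\textbf{n}_{\mathfrak{F}}\!\cdot\!\textbf{t}_{\mathfrak{F}}=0$ and use $\dot{\textbf{t}}_{\mathfrak{F}}=\kappa_{\mathfrak{F}}\textbf{n}_{\mathfrak{F}}$) while its $\textbf{b}_{\mathfrak{F}}$-coefficient equals $\tau_{\mathfrak{F}}$ by the defining formula \eqref{bnmc}. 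Verifying that the $F^{\alpha}$-calculus genuinely supports the required product rule on $\mathfrak{F}$, so that the classical algebraic manipulations transfer verbatim, is the technical heart of the argument.
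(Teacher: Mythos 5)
The paper states this theorem without any proof, so there is nothing to compare your argument against; your proposal supplies the missing derivation, and it is the correct classical one. Your frame computations are right: $\textbf{u}'=v\,\textbf{t}_{\mathfrak{F}}$, $\textbf{u}''=v'\,\textbf{t}_{\mathfrak{F}}+v^{2}\kappa_{\mathfrak{F}}\textbf{n}_{\mathfrak{F}}$, $\textbf{u}'\times\textbf{u}''=v^{3}\kappa_{\mathfrak{F}}\textbf{b}_{\mathfrak{F}}$, and the scalar triple product picks out exactly the $\textbf{b}_{\mathfrak{F}}$-component $v^{3}\kappa_{\mathfrak{F}}\tau_{\mathfrak{F}}$ of $\textbf{u}'''$, giving $\tau_{\mathfrak{F}}=(\textbf{u}'\times\textbf{u}'')\cdot\textbf{u}'''/(v^{6}\kappa_{\mathfrak{F}}^{2})$. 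You are also right that the stated denominator $|\textbf{u}'\times\textbf{u}'''|^{2}$ must be a typo for $|\textbf{u}'\times\textbf{u}''|^{2}$ (and, strictly, the absolute-value bars on the numerator should be read as a determinant, not a modulus, or the sign of $\tau_{\mathfrak{F}}$ in the helix example could not come out negative for $b<0$).

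Two remarks on the dependencies you flag. First, your derivation leans on the fractal Serret--Frenet equations, which the paper only states (and partially proves) in the \emph{following} section, so as written your proof has a forward reference; your sketch of how to derive the middle equation from the orthogonality relations is nevertheless sound, and in fact more careful than the paper's own argument there, which erroneously discards the $\tau_{\mathfrak{F}}\textbf{b}_{\mathfrak{F}}$ term by claiming $\textbf{n}_{\mathfrak{F}}\times\textbf{t}_{\mathfrak{F}}$ is orthogonal to $\textbf{b}_{\mathfrak{F}}$ (it is parallel to it). Second, the Leibniz and chain rules for $D^{\alpha}_{\mathfrak{F}}$ that you correctly identify as the technical heart do hold in the Parvate--Gangal framework by conjugacy with ordinary calculus through the staircase function $J(\theta)$, so that step can be discharged; just note that the paper's examples carry stray $\Gamma(\alpha+1)$ factors that cancel in your ratio and therefore do not affect the final formula.
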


\begin{definition}
The fractal tangent vectors along a fractal curve $C$ give rise to a new fractal curve $\Upsilon$ on the fractal sphere snowflake of radius $1$ centered at the origin, as illustrated in Figure \ref{iiii5}. This curve $\Upsilon$ is referred to as the fractal spherical indicatrix of $\textbf{t}_{\mathfrak{F}}$. If we consider $\textbf{u}=\textbf{u}(\theta)$ as a natural parameterization of the fractal curve $C$, then $\textbf{u}{1}=\textbf{t}_{\mathfrak{F}}(\theta)=\dot{\textbf{u}}(\theta)$ represents the fractal curve $\Upsilon$.
\begin{figure}[H]
  \centering
  \includegraphics[scale=0.6]{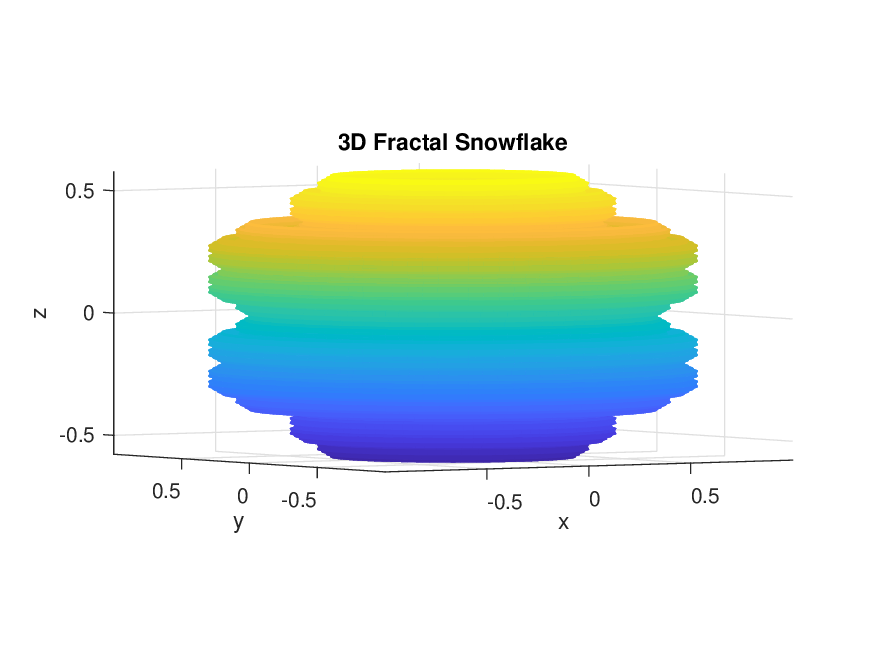}
  \caption{Graph of the 3D fractal snowflake}\label{iiii5}
\end{figure}
\end{definition}
\begin{example}
Consider the fractal helix curve described by:
\begin{equation}
\textbf{u}(t)=a \cos(S_{\mathfrak{F}}^{\alpha}(t)) \hat{e}{1}+
b \sin(S{\mathfrak{F}}^{\alpha}(t)) \hat{e}{2}+b S{\mathfrak{F}}^{\alpha}(t)\hat{e}_{3},~~a>0, b\neq0.
\end{equation}
We find the following properties for the fractal helix curve and its fractal spherical indicatrix:
\begin{align}
\textbf{t}_{\mathfrak{F}}&=(a^2+b^2)^{-1/2}(-a \sin( S_{\mathfrak{F}}^{\alpha}(t))\hat{e}_{1}+
a\cos(S_{\mathfrak{F}}^{\alpha}(t))\hat{e}_{2}+b\hat{e}_{3})\nonumber\\
\textbf{n}_{\mathfrak{F}}&=
-(\cos(S_{\mathfrak{F}}^{\alpha}(t))
\hat{e}_{1}+\sin(S_{\mathfrak{F}}^{\alpha}(t))\hat{e}_{2})\nonumber\\
\textbf{b}_{\mathfrak{F}}&=(a^2+b^2)^{-1/2}
(b\sin(S_{\mathfrak{F}}^{\alpha}(t)) \hat{e}_{1}-
b\cos(S_{\mathfrak{F}}^{\alpha}(t)) \hat{e}_{2}+a \hat{e}_{3})
\end{align}
Note that all these vectors have constant components with respect to $\hat{e}_{3}$, resulting in fractal snowflakes about the $x_{3}$ axis in their spherical images.
The radii of fractal curvature for the fractal spherical indicatrix of $\textbf{t}_{\mathfrak{F}}$, $\textbf{n}_{\mathfrak{F}}$, and $\textbf{b}_{\mathfrak{F}}$ are respectively given by:
\begin{equation}
\varrho_{\textbf{t}_{\mathfrak{F}}}=\frac{a^{2}}{(a^2+b^2)^{1/2}},~~~
\varrho_{\textbf{n}_{\mathfrak{F}}}=1,~~~
\varrho_{\textbf{b}_{\mathfrak{F}}}=\frac{b^{2}}{(a^2+b^2)^{1/2}}
\end{equation}
\end{example}

\section{Fractal Frenet Equation}
In this section, we introduce the analogue of the Frenet equations for a fractal curve. The Fractal Frenet equation is a set of differential equations that describe the behavior of certain vectors along a fractal curve. The Fractal Frenet equations can be expressed as follows:
\begin{theorem}
Along a curve $\textbf{u}=\textbf{u}(\theta)$, the vectors $\textbf{t}_{\mathfrak{F}}$, $\textbf{n}_{\mathfrak{F}}$, and $\textbf{b}_{\mathfrak{F}}$ satisfy the following equations, which are known as the Serret-Frenet equations of the fractal curve:
\begin{align}
  \dot{\textbf{t}}_{\mathfrak{F}}&=\kappa_{\mathfrak{F}}
\textbf{n}_{\mathfrak{F}}\nonumber\\
\dot{\textbf{n}}_{\mathfrak{F}}&=-\kappa_{\mathfrak{F}}
\textbf{t}_{\mathfrak{F}}+\tau_{\mathfrak{F}}\textbf{b}_{\mathfrak{F}}\nonumber\\
\dot{\textbf{b}}_{\mathfrak{F}}&=-\tau_{\mathfrak{F}}\textbf{n}_{\mathfrak{F}}
\end{align}
These equations describe the evolution of the fractal tangent vector $\textbf{t}_{\mathfrak{F}}$, normal vector $\textbf{n}_{\mathfrak{F}}$, and binormal vector $\textbf{b}_{\mathfrak{F}}$ along the fractal curve.
\end{theorem}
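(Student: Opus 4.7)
The plan is to mirror the classical Serret--Frenet derivation, with ordinary differentiation replaced by the $F^{\alpha}$-derivative $d^{\alpha}_{\mathfrak{F}}/d^{\alpha}_{\mathfrak{F}}\theta$ from Definition \ref{Defoip}. The three equations will follow from (i) unwinding the definitions of $\textbf{k}_{\mathfrak{F}}$, $\textbf{n}_{\mathfrak{F}}$, and $\kappa_{\mathfrak{F}}$ for the first equation, and (ii) differentiating the orthonormality relations together with $\textbf{b}_{\mathfrak{F}}=\textbf{t}_{\mathfrak{F}}\times\textbf{n}_{\mathfrak{F}}$ and $\textbf{n}_{\mathfrak{F}}=\textbf{b}_{\mathfrak{F}}\times\textbf{t}_{\mathfrak{F}}$ for the other two. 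A prerequisite invoked throughout is the Leibniz rule for the $F^{\alpha}$-derivative acting on dot and cross products of $F^{\alpha}$-differentiable vector fields; this reduces to the scalar product rule of the $F^{\alpha}$-calculus established in \cite{parvate2011calculus,Alireza-book}.

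The first equation is immediate: by definition $\textbf{k}_{\mathfrak{F}}=\dot{\textbf{t}}_{\mathfrak{F}}$, and combining $\textbf{n}_{\mathfrak{F}}=\textbf{k}_{\mathfrak{F}}/|\textbf{k}_{\mathfrak{F}}|$ with $\kappa_{\mathfrak{F}}=|\textbf{k}_{\mathfrak{F}}|$ yields $\dot{\textbf{t}}_{\mathfrak{F}}=\kappa_{\mathfrak{F}}\textbf{n}_{\mathfrak{F}}$. Before tackling the remaining equations I would verify that $\{\textbf{t}_{\mathfrak{F}},\textbf{n}_{\mathfrak{F}},\textbf{b}_{\mathfrak{F}}\}$ is a right-handed orthonormal frame: $\textbf{t}_{\mathfrak{F}}$ is unit because $\theta$ is the arc-length-type parameter along $\mathfrak{F}$; applying the Leibniz rule to $|\textbf{t}_{\mathfrak{F}}|^{2}=1$ gives $\textbf{t}_{\mathfrak{F}}\cdot\dot{\textbf{t}}_{\mathfrak{F}}=0$, hence $\textbf{t}_{\mathfrak{F}}\perp\textbf{n}_{\mathfrak{F}}$; and $\textbf{b}_{\mathfrak{F}}=\textbf{t}_{\mathfrak{F}}\times\textbf{n}_{\mathfrak{F}}$ is then automatically a unit vector orthogonal to the other two.

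For the third equation I will differentiate $\textbf{b}_{\mathfrak{F}}=\textbf{t}_{\mathfrak{F}}\times\textbf{n}_{\mathfrak{F}}$ by the cross-product Leibniz rule, obtaining $\dot{\textbf{b}}_{\mathfrak{F}}=\kappa_{\mathfrak{F}}(\textbf{n}_{\mathfrak{F}}\times\textbf{n}_{\mathfrak{F}})+\textbf{t}_{\mathfrak{F}}\times\dot{\textbf{n}}_{\mathfrak{F}}=\textbf{t}_{\mathfrak{F}}\times\dot{\textbf{n}}_{\mathfrak{F}}$, which is orthogonal to $\textbf{t}_{\mathfrak{F}}$; differentiating $|\textbf{b}_{\mathfrak{F}}|^{2}=1$ gives $\dot{\textbf{b}}_{\mathfrak{F}}\perp\textbf{b}_{\mathfrak{F}}$; so $\dot{\textbf{b}}_{\mathfrak{F}}$ must be parallel to $\textbf{n}_{\mathfrak{F}}$, and the defining formula \eqref{bnmc}, $\tau_{\mathfrak{F}}=-\dot{\textbf{b}}_{\mathfrak{F}}\cdot\textbf{n}_{\mathfrak{F}}$, pins the coefficient, giving $\dot{\textbf{b}}_{\mathfrak{F}}=-\tau_{\mathfrak{F}}\textbf{n}_{\mathfrak{F}}$. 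For the middle equation I will then differentiate $\textbf{n}_{\mathfrak{F}}=\textbf{b}_{\mathfrak{F}}\times\textbf{t}_{\mathfrak{F}}$ and substitute the already-derived formulas: $\dot{\textbf{n}}_{\mathfrak{F}}=-\tau_{\mathfrak{F}}(\textbf{n}_{\mathfrak{F}}\times\textbf{t}_{\mathfrak{F}})+\kappa_{\mathfrak{F}}(\textbf{b}_{\mathfrak{F}}\times\textbf{n}_{\mathfrak{F}})=\tau_{\mathfrak{F}}\textbf{b}_{\mathfrak{F}}-\kappa_{\mathfrak{F}}\textbf{t}_{\mathfrak{F}}$, using the right-handed frame identities $\textbf{n}_{\mathfrak{F}}\times\textbf{t}_{\mathfrak{F}}=-\textbf{b}_{\mathfrak{F}}$ and $\textbf{b}_{\mathfrak{F}}\times\textbf{n}_{\mathfrak{F}}=-\textbf{t}_{\mathfrak{F}}$.

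The main obstacle is the Leibniz rule for $F^{\alpha}$-differentiation of vector-valued dot and cross products: it is used repeatedly and its structure is identical to the classical argument, but it must be justified at the level of the fractal limit rather than taken for granted. I would either prove it as a short preparatory lemma from Definition \ref{Defoip} or invoke it from \cite{parvate2011calculus,Alireza-book}. A secondary point to flag is the implicit identification of $\theta$ with the fractal arc-length $s_{\mathfrak{F}}$, which is what makes $\textbf{t}_{\mathfrak{F}}$ a genuine unit vector; this convention is consistent with the usage of $\theta$ as the natural parameter throughout Section 3 and with the tangent definition preceding the theorem.
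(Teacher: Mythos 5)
Your proposal is correct and is in fact more complete than the argument printed in the paper. The paper proves only the middle equation: it differentiates $\textbf{n}_{\mathfrak{F}}=\textbf{b}_{\mathfrak{F}}\times\textbf{t}_{\mathfrak{F}}$ exactly as you do, arriving at $\dot{\textbf{n}}_{\mathfrak{F}}=-\tau_{\mathfrak{F}}(\textbf{n}_{\mathfrak{F}}\times\textbf{t}_{\mathfrak{F}})+\textbf{b}_{\mathfrak{F}}\times(\kappa_{\mathfrak{F}}\textbf{n}_{\mathfrak{F}})$, but then asserts that the first term vanishes because $\textbf{n}_{\mathfrak{F}}\times\textbf{t}_{\mathfrak{F}}$ is ``orthogonal'' to $\textbf{b}_{\mathfrak{F}}$, and concludes $\dot{\textbf{n}}_{\mathfrak{F}}=-\kappa_{\mathfrak{F}}\textbf{t}_{\mathfrak{F}}$. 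That step is wrong: in a right-handed orthonormal frame $\textbf{n}_{\mathfrak{F}}\times\textbf{t}_{\mathfrak{F}}=-\textbf{b}_{\mathfrak{F}}$ is parallel, not orthogonal, to $\textbf{b}_{\mathfrak{F}}$, and retaining it produces precisely the $+\tau_{\mathfrak{F}}\textbf{b}_{\mathfrak{F}}$ term that the theorem itself asserts; as printed, the paper's conclusion contradicts its own statement. Your route --- first equation from the definitions of $\textbf{k}_{\mathfrak{F}}$, $\textbf{n}_{\mathfrak{F}}$, $\kappa_{\mathfrak{F}}$; third equation from the orthogonality of $\dot{\textbf{b}}_{\mathfrak{F}}$ to both $\textbf{t}_{\mathfrak{F}}$ and $\textbf{b}_{\mathfrak{F}}$ together with the definition of $\tau_{\mathfrak{F}}$; then the middle equation by substitution using $\textbf{n}_{\mathfrak{F}}\times\textbf{t}_{\mathfrak{F}}=-\textbf{b}_{\mathfrak{F}}$ and $\textbf{b}_{\mathfrak{F}}\times\textbf{n}_{\mathfrak{F}}=-\textbf{t}_{\mathfrak{F}}$ --- is the standard Serret--Frenet bookkeeping carried out correctly, and it also supplies the two equations the paper does not prove at all. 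The prerequisites you flag (a Leibniz rule for the $F^{\alpha}$-derivative acting on dot and cross products, and the identification of $\theta$ with the arc-length-type parameter so that $\textbf{t}_{\mathfrak{F}}$ is genuinely a unit vector) are exactly the points that need to be made explicit; the paper uses both tacitly, so stating and proving the Leibniz lemma, or citing it from the $F^{\alpha}$-calculus literature, is a genuine improvement rather than a gap in your argument.
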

\begin{proof}
To prove the second equation, we begin by fractal differentiating $\textbf{n}_{\mathfrak{F}}=\textbf{b}_{\mathfrak{F}}\times \textbf{t}_{\mathfrak{F}}$. This yields:
\begin{equation}
\dot{\textbf{n}}_{\mathfrak{F}} = \dot{\textbf{b}}_{\mathfrak{F}}\times \textbf{t}_{\mathfrak{F}} + \textbf{b}_{\mathfrak{F}}\times \dot{\textbf{t}}_{\mathfrak{F}} = -\tau_{\mathfrak{F}}(\textbf{n}_{\mathfrak{F}}\times \textbf{t}_{\mathfrak{F}}) + \textbf{b}_{\mathfrak{F}} \times (\kappa_{\mathfrak{F}}\textbf{n}_{\mathfrak{F}})
\end{equation}
Since $\textbf{n}_{\mathfrak{F}}\times \textbf{t}_{\mathfrak{F}}$ is orthogonal to $\textbf{b}_{\mathfrak{F}}$, the cross product is zero. So, the equation simplifies to:
\begin{equation}
\dot{\textbf{n}}_{\mathfrak{F}} = \kappa_{\mathfrak{F}}(-\textbf{t}_{\mathfrak{F}}) = -\kappa_{\mathfrak{F}}\textbf{t}_{\mathfrak{F}}
\end{equation}
Thus, we have obtained the second equation of the Serret-Frenet equations for the fractal curve.
\end{proof}
\begin{remark}
A noteworthy observation is that the Frenet equations can be conveniently written in matrix form as:

\begin{align}
  \dot{\textbf{t}}_{\mathfrak{F}}&=0\textbf{t}_{\mathfrak{F}}+\kappa_{\mathfrak{F}}
\textbf{n}_{\mathfrak{F}}+0\textbf{b}_{\mathfrak{F}}\nonumber\\
\dot{\textbf{n}}_{\mathfrak{F}}&=-\kappa_{\mathfrak{F}}
\textbf{t}_{\mathfrak{F}}+\tau_{\mathfrak{F}}\textbf{b}_{\mathfrak{F}}  \nonumber\\
\dot{\textbf{b}}_{\mathfrak{F}}&=-\tau_{\mathfrak{F}}\textbf{n}_{\mathfrak{F}}
\end{align}
This allows us to compactly represent the coefficients of $\textbf{t}{\mathfrak{F}}$, $\textbf{n}{\mathfrak{F}}$, and $\textbf{b}_{\mathfrak{F}}$ in a matrix:
\begin{equation}
  \left(
  \begin{array}{ccc}
    0 & \kappa_{\mathfrak{F}} & 0 \\
    -\kappa_{\mathfrak{F}} & 0 & \tau_{\mathfrak{F}} \\
    0 & -\tau_{\mathfrak{F}} & 0 \\
  \end{array}
\right)
\end{equation}
This matrix representation elegantly summarizes the relationships between the tangent, normal, and binormal vectors along the fractal curve.
\end{remark}
\begin{example}
Consider the fractal logarithmic spiral described by the intrinsic equations $\kappa_{\mathfrak{F}}=1/J(\theta)$ and $\tau_{\mathfrak{F}}=0$, where $\theta>0$. We can further simplify the equations as follows:

\begin{equation}
  \frac{d^{\alpha}_{\mathfrak{F}}\phi}
{d^{\alpha}_{\mathfrak{F}}t}=\kappa_{\mathfrak{F}}=1/J(\theta)
\end{equation}
By integrating the above expression, we obtain:
\begin{equation}
\phi=\log(J(\theta))+C_{1}
\end{equation}
where $J(\theta)=\exp(\phi-C_{1})$ and $\kappa_{\mathfrak{F}}=\exp(-(\phi-C_{1}))$. Setting $\phi-\pi/4=\psi$, we can express the vector $\textbf{u}$ as:

\begin{equation}
  \textbf{u}=\int\frac{1}{\kappa_{\mathfrak{F}}}((\cos \phi)\hat{e}_{1}
+(\sin \phi)\hat{e}_{2})d\phi +C_{2}=\frac{1}{\sqrt{2}}\exp(\psi)[\cos(\psi)\hat{e}_{1}+
\sin(\psi)\hat{e}_{2}]
\end{equation}
Here, we have chosen specific values for constants $C_{1}$ and $C_{2}$, specifically $C_{1}=\pi/4$ and $C_{2}=0$, to simplify the expression. This represents the fractal logarithmic spiral in its natural representation.
\end{example}
\section{Conclusion}
The study of fractal curves and the Fractal Frenet equations provides a fascinating and profound exploration into the world of fractal geometry. These insights not only deepen our understanding of complex shapes but also have potential applications in various fields, including mathematics, physics, computer graphics, and beyond. As researchers continue to investigate and analyze fractal curves, we can anticipate further discoveries and applications that will enrich our understanding of the beauty and intricacy of the natural world.\\

\bibliographystyle{elsarticle-num}

\bibliography{Refrancesma2}

\begin{thebibliography}{10}
\expandafter\ifx\csname url\endcsname\relax
  \def\url#1{\texttt{#1}}\fi
\expandafter\ifx\csname urlprefix\endcsname\relax\def\urlprefix{URL }\fi
\expandafter\ifx\csname href\endcsname\relax
  \def\href#1#2{#2} \def\path#1{#1}\fi

\bibitem{Mandelbro}
B.~B. Mandelbrot, The Fractal Geometry of Nature, WH freeman New York, 1982.

\bibitem{fraser2020assouad}
J.~M. Fraser, Assouad dimension and fractal geometry, Vol. 222, Cambridge University Press, 2020.

\bibitem{ma-12}
M.~L. Lapidus, G.~Radunovi{\'{c}}, D.~{\v{Z}}ubrini{\'{c}}, Fractal Zeta Functions and Fractal Drums, Springer International Publishing, 2017.

\bibitem{robertson2020baudelaire}
L.~Robertson, The Baudelaire Fractal, Coach House Books, 2020.

\bibitem{Ewqq}
N.~Lesmoir-Gordon, B.~Rood, Introducing Fractal Geometry, Icon Books, 2000.

\bibitem{Qaqqqqxcs}
C.~Bovill, Fractal Geometry in Architecture and Design, Birkhauser Boston, MA, 1996.

\bibitem{b-6}
M.~F. Barnsley, Fractals Everywhere, Academic Press, 2014.

\bibitem{rosenberg2020fractal}
E.~Rosenberg, Fractal dimensions of networks, Vol.~1, Springer, 2020.

\bibitem{falconer1999techniques}
K.~Falconer, Fractal Geometry: Mathematical Foundations and Applications, John Wiley \& Sons, 2004.

\bibitem{Qaswet}
P.~R. Massopust, Fractal Functions, Fractal Surfaces, and Wavelets, Academic Press, 2017.

\bibitem{jorgensen2006analysis}
P.~E. Jorgensen, Analysis and Probability: Wavelets, Signals, Fractals, Vol. 234, Springer Science \& Business Media, 2006.

\bibitem{Rwaqqq}
T.~Vicsek, M.~Shlesinger, M.~Matsushita (Eds.), Fractals in Natural Sciences, World Scientific, 1994.

\bibitem{Welch-5}
K.~Welch, A Fractal Topology of Time: Deepening into Timelessness, Fox Finding Press, 2020.

\bibitem{Shlesinger-6}
M.~F. Shlesinger, Fractal time in condensed matter, Annu. Rev. Phys. Chem. 39~(1) (1988) 269--290.

\bibitem{AWqq789}
A.~Jadczyk, Quantum Fractals: From Heisenberg's Uncertainty to Barnsley's Fractality, World Scientific, 2014.

\bibitem{ma-7}
M.~T. Barlow, E.~A. Perkins, Brownian motion on the sierpinski gasket, Probab. Theory Rel. 79~(4) (1988) 543--623.

\bibitem{guo2000oscillation}
J.~Guo, The oscillation of the occupation time process of super-brownian motion on sierpinski gasket, Sci. China Math. 43 (2000) 1250--1257.

\bibitem{tarasov2016heat}
V.~E. Tarasov, Heat transfer in fractal materials, International Journal of Heat and Mass Transfer 93 (2016) 427--430.

\bibitem{ma-5}
A.~S. Balankin, A continuum framework for mechanics of fractal materials i: from fractional space to continuum with fractal metric, Eur. Phys. J. B 88~(4) (Apr. 2015).

\bibitem{uchaikin2013fractional}
V.~V. Uchaikin, Fractional Derivatives for Physicists and Engineers, Vol.~2, Springer, 2013.

\bibitem{prodanov2020generalized}
D.~Prodanov, Generalized differentiability of continuous functions, Fractal and Fractional 4~(4) (2020) 56.

\bibitem{ma-6}
V.~E. Tarasov, Fractional Dynamics, Springer Berlin Heidelberg, 2010.

\bibitem{Trifcebook}
T.~Sandev, {\v{Z}}.~Tomovski, Fractional Equations and Models, Springer International Publishing, 2019.

\bibitem{ma-9}
M.~Czachor, Waves along fractal coastlines: From fractal arithmetic to wave equations, Acta Phys. Pol. B 50~(4) (2019) 813.

\bibitem{bishop2017fractals}
C.~J. Bishop, Y.~Peres, Fractals in probability and analysis, Vol. 162, Cambridge University Press, 2017.

\bibitem{jiang1998some}
H.~Jiang, W.~Su, Some fundamental results of calculus on fractal sets, Commun. Nonlinear Sci. Numer. Simul. 3~(1) (1998) 22--26.

\bibitem{Withers}
W.~Withers, Fundamental theorems of calculus for {H}ausdorff measures on the real line, J. Math. Anal. Appl. 129~(2) (1988) 581--595.

\bibitem{bongiorno2011henstock}
D.~Bongiorno, G.~Corrao, et~al., The {H}enstock-{K}urzweil-{S}tieltjes type integral for real functions on a fractal subset of the real line, in: Bollettino di Matematica pura e applicata vol. IV, Vol.~4, Aracne, 2011, pp. 5--16.

\bibitem{bongiorno2015fundamental}
D.~Bongiorno, G.~Corrao, On the fundamental theorem of calculus for fractal sets, Fractals 23~(02) (2015) 1550008.

\bibitem{bongiorno2018derivatives}
D.~Bongiorno, Derivatives not first return integrable on a fractal set, Ric. di Mat. 67~(2) (2018) 597--604.

\bibitem{giona1995fractal}
M.~Giona, Fractal calculus on [0, 1], Chaos Solit. Fractals 5~(6) (1995) 987--1000.

\bibitem{parvate2009calculus}
A.~Parvate, A.~D. Gangal, Calculus on fractal subsets of real line-{I}: Formulation, Fractals 17~(01) (2009) 53--81.

\bibitem{parvate2011calculus}
A.~Parvate, S.~Satin, A.~Gangal, Calculus on fractal curves in $\mathbb{R}^{n}$, Fractals 19~(01) (2011) 15--27.

\bibitem{Alireza-book}
A.~K. Golmankhaneh, Fractal Calculus and its Applications, World Scientific, 2022.

\bibitem{khalili2019random}
A.~K. Golmankhaneh, A.~Fernandez, Random variables and stable distributions on fractal {C}antor sets, Fractal Fract. 3~(2) (2019) 31.

\bibitem{deppman2023fractal}
A.~Deppman, E.~Megias, R.~Pasechnik, Fractal derivatives, fractional derivatives and $ q $-deformed calculus, arXiv preprint arXiv:2305.04633 (2023).

\bibitem{samayoa2020fractal}
D.~Samayoa, L.~Ochoa-Ontiveros, L.~Dami{\'a}n-Adame, E.~Reyes~de Luna, L.~{\'A}lvarez-Romero, G.~Romero-Paredes, Fractal model equation for spontaneous imbibition, Rev. Mex. de Fis. 66~(3) (2020) 283--290.

\bibitem{golmankhaneh2021fractalBro}
A.~K. Golmankhaneh, R.~T. Sibatov, Fractal stochastic processes on thin {C}antor-like sets, Mathematics 9~(6) (2021) 613.

\bibitem{golmankhaneh2020stochastic}
A.~K. Golmankhaneh, C.~Tun{\c{c}}, Stochastic differential equations on fractal sets, Stochastics 92~(8) (2020) 1244--1260.

\bibitem{golmankhaneh2018sub}
A.~K. Golmankhaneh, A.~S. Balankin, Sub-and super-diffusion on {C}antor sets: Beyond the paradox, Phys. Lett. A. 382~(14) (2018) 960--967.

\bibitem{Alireza-Fernandez-1}
A.~K. Golmankhaneh, A.~Fernandez, A.~K. Golmankhaneh, D.~Baleanu, Diffusion on middle-$\xi$ {C}antor sets, Entropy 20~(7) (2018) 504.

\bibitem{golmankhaneh2021equilibrium}
A.~K. Golmankhaneh, K.~Welch, Equilibrium and non-equilibrium statistical mechanics with generalized fractal derivatives: A review, Mod. Phys. Lett. A 36~(14) (2021) 2140002.

\bibitem{golmankhaneh2023fuzzification}
A.~K. Golmankhaneh, K.~Welch, C.~Serpa, P.~E. J{\o}rgensen, Fuzzification of fractal calculus, arXiv preprint arXiv:2302.07641 (2023).

\bibitem{el2022nonstandard}
R.~A. El-Nabulsi, A.~K. Golmankhaneh, Nonstandard and fractal electrodynamics in finsler--randers space, Int. J. Geom. Methods M. (2022) 2250080.

\bibitem{ELNABULSI2022112329}
R.~A. El-Nabulsi, A.~{Khalili Golmankhaneh}, P.~Agarwal, On a new generalized local fractal derivative operator, Chaos Solit. Fractals 161 (2022) 112329.

\bibitem{dutkay2006wavelets}
D.~E. Dutkay, P.~E. Jorgensen, Wavelets on fractals, Rev. Mat. Iberoamericana 22~(1) (2006) 131--180.

\bibitem{golmankhaneh2019sumudu}
A.~K. Golmankhaneh, C.~Tun{\c{c}}, Sumudu transform in fractal calculus, Appl. Math. Comput. 350 (2019) 386--401.

\bibitem{Fourier1}
A.~K. Golmankhaneh, K.~Ali, R.~Yilmazer, M.~Kaabar, Local fractal {F}ourier transform and applications, Comput. Methods Differ. Equ. 10~(3) (2021) 595--607.

\bibitem{gowrisankar2021fractal}
A.~Gowrisankar, A.~K. Golmankhaneh, C.~Serpa, Fractal calculus on fractal interpolation functions, Fractal Fract. 5~(4) (2021) 157.

\bibitem{khalili2023non}
A.~K. Golmankhaneh, K.~Welch, C.~Serpa, P.~E. J{\o}rgensen, Non-standard analysis for fractal calculus, The Journal of Analysis 31 (2023) 1895--1916.

\end{thebibliography}
\end{document}